\documentclass{article}

\pdfoutput=1

%

\newcommand{\NIPSvsARXIV}[2]{#1}  
\renewcommand{\NIPSvsARXIV}[2]{#2}  

\usepackage{latexstyle,wrapfig}

\title{Stochastic Optimization for\\Large-scale Optimal Transport}

%

\newcommand{\myauthor}[1]{\begin{tabular}{c}#1\end{tabular}}

\usepackage[
  letterpaper,
  textheight=9in,
  textwidth=5.5in,
  top=1in
  ]{geometry}

\NIPSvsARXIV{
\author{
 	Aude Genevay\\
  CEREMADE, Univ. Paris-Dauphine\\
  INRIA -- Mokaplan project-team \\
	\texttt{genevay@ceremade.dauphine.fr} \\
  \And
  Marco Cuturi\\
  Kyoto University\\
  \texttt{mcuturi@i.kyoto-u.ac.jp}\\
  \And
 	Gabriel Peyr\'e\\
  CNRS and CEREMADE, Univ. Paris-Dauphine\\
  INRIA -- Mokaplan project-team \\
  \texttt{gabriel.peyre@ceremade.dauphine.fr} \\
  \And
 	Francis Bach\\
  INRIA --  Sierra project-team\\
  DI, ENS\\
  \texttt{francis.bach@inria.fr} 
}
}{
\author{
	\myauthor{
 	Aude Genevay\\
  CEREMADE, \\  Univ. Paris-Dauphine\\
  INRIA -- Mokaplan project-team \\
	\texttt{genevay@ceremade.dauphine.fr}
	}
	\myauthor{
  Marco Cuturi\\
  Kyoto University\\
  \texttt{mcuturi@i.kyoto-u.ac.jp}
  }\\ \\
  \myauthor{
 	Gabriel Peyr\'e\\
  CNRS and CEREMADE, \\  Univ. Paris-Dauphine\\
  INRIA -- Mokaplan project-team \\
  \texttt{gabriel.peyre@ceremade.dauphine.fr}
  }
  \myauthor{
 	Francis Bach\\
  INRIA --  Sierra project-team\\
  D\'epartement d'Informatique de l'ENS\\
  (CNRS/ENS/INRIA) \\
  \texttt{francis.bach@inria.fr} 
  }
}
}

\begin{document}

\maketitle


\begin{abstract}
Optimal transport (OT) defines a powerful framework to compare probability distributions in a geometrically faithful way.
However, the practical impact of OT is still limited because of its computational burden.
We propose a new class of stochastic optimization algorithms to cope with large-scale problems routinely encountered in machine learning applications. These methods are able to manipulate arbitrary distributions (either discrete or continuous) by simply requiring to be able to draw samples from them, which is the typical setup in high-dimensional learning problems. This alleviates the need to discretize these densities, while giving access to provably convergent methods that output the correct distance without discretization error.
These algorithms rely on two main ideas: \emph{(a)} the dual OT problem can be re-cast as the maximization of an expectation~; \emph{(b)} entropic regularization of the primal OT problem results in a smooth dual optimization optimization which can be addressed with algorithms that have a provably faster convergence. 
We instantiate these ideas in three different setups: \emph{(i)} when comparing a discrete distribution to another, we show that incremental stochastic optimization schemes can beat Sinkhorn's algorithm, the current state-of-the-art finite dimensional OT solver; \emph{(ii)} when comparing a discrete distribution to a continuous density, a semi-discrete reformulation of the dual program is amenable to averaged stochastic gradient descent, leading to better performance than approximately solving the problem by discretization ; \emph{(iii)} when dealing with two continuous densities, we propose a stochastic gradient descent over a reproducing kernel Hilbert space (RKHS). This is currently the only known method to solve this problem, apart from computing OT on finite samples. We backup these claims on a set of discrete, semi-discrete and continuous benchmark problems. 
\end{abstract}


\section{Introduction}

\vspace*{-.101cm}

Many problems in computational sciences require to compare probability measures or histograms. As a set of representative examples, let us quote: bag-of-visual-words comparison in computer vision~\cite{rubner-2000}, color and shape processing in computer graphics~\cite{solomon-2015}, bag-of-words for natural language processing~\cite{kusner2015doc} and multi-label classification~\cite{2015-Frogner}. In all of these problems, a geometry between the features (words, visual words, labels) is usually known, and can be leveraged to compare probability distributions in a geometrically faithful way. This underlying geometry might be for instance the planar Euclidean domain for 2-D shapes, a perceptual 3D color metric space for image processing or a high-dimensional semantic embedding for words. Optimal transport (OT)~\cite{villani-2003} is the canonical way to automatically lift this geometry to define a metric for probability distributions. That metric is known as the \emph{Wasserstein} or \emph{earth mover's} distance. As an illustrative example, OT can use a metric between words to build a metric between documents that are represented as frequency histograms of words (see~\cite{kusner2015doc} for details). All the above-cited lines of work advocate, among others, that OT is the natural choice to solve these problems, and that it leads to performance improvement when compared to geometrically-oblivious distances such as the Euclidean or $\chi^2$ distances or the Kullback-Leibler divergence. 
However, these advantages come at the price of an enormous computational overhead. This is especially true because current OT solvers require to sample beforehand these distributions on a pre-defined set of points, or on a grid. This is both inefficient (in term of storage and speed) and counter-intuitive. Indeed, most high-dimensional computational scenarios naturally represent distributions as objects from which one can \textit{sample}, not as density functions to be discretized. 
Our goal is to alleviate these shortcomings. We propose a class of provably convergent stochastic optimization schemes that can handle both discrete and continuous distributions through sampling. 


\textbf{Previous works.}
The prevalent way to compute OT distances is by solving the so-called Kantorovitch problem~\cite{Kantorovich42}  (see Section~\ref{sec:OT} for a short primer on the basics of OT formulations), which boils down to a large-scale linear program when dealing with discrete distributions (i.e., finite weighted sums of Dirac masses). This linear program can be solved using network flow solvers, which can be further refined to assignment problems when comparing measures of the same size with uniform weights~\cite{Burkard09}.
Recently, regularized approaches that solve the OT with an entropic penalization~\cite{CuturiSinkhorn} have been shown to be extremely efficient to approximate OT solutions at a very low computational cost. These regularized approaches have supported recent applications of OT to computer graphics~\cite{solomon-2015} and machine learning~\cite{2015-Frogner}. These methods apply the celebrated Sinkhorn algorithm's~\cite{Sinkhorn64}, and can be extended to solve more exotic transportation-related problems such as the computation of barycenters~\cite{solomon-2015}. Their chief computational advantage over competing solvers is that each iteration boils down to matrix-vector multiplications, which can be easily parallelized, streams extremely well on GPU, and enjoys linear-time implementation on regular grids or triangulated domains~\cite{solomon-2015}.


These methods are however purely discrete and cannot cope with continuous densities. The only known class of methods that can overcome this limitation are so-called semi-discrete solvers~\cite{AurenhammerHA98}, that can be implemented efficiently using computational geometry primitives~\cite{Merigot11}. They can compute distance between a discrete distributions and a continuous density. 
Nonetheless, they are restricted to the Euclidean squared cost, and can only be implemented in low dimensions (2-D and 3-D). Solving these semi-discrete problems efficiently could have a significant impact for applications to density fitting with an OT loss~\cite{BassettiaEstimation}  for machine learning applications, see~\cite{CuturiBoltzman}. Lastly, let us point out that there is currently no method that can compute OT distances between two continuous densities, which is thus an open problem we tackle in this article. 




\textbf{Contributions.}
This paper introduces stochastic optimization methods to compute large-scale optimal transport in all three possible settings: \emph{discrete OT}, to compare a discrete \emph{vs.} another discrete measure; \emph{semi-discrete OT}, to compare a discrete \emph{vs.} a continuous measure; and \emph{continous OT}, to compare a continuous \emph{vs.} another continuous measure. These methods can be used to solve both classical OT problems and their entropic-regularized versions (which enjoy faster convergence properties).
We show that the discrete OT problem can be tackled using incremental algorithms, and we consider in particular the stochastic averaged gradient (SAG) method~\cite{SAG}. Each iteration of that algorithm requires $N$ operations ($N$ being the size of the supports of the input distributions), which makes it scale better in large-scale problems than the state-of-the-art Sinkhorn algorithm, while still enjoying a convergence rate of $O(1/k)$, $k$ being the number of iterations. 
We show that the semi-discrete OT problem can be solved using averaged stochastic gradient descent (SGD), whose convergence rate is $O(1/\sqrt{k})$. For large-scale problems, this approach is numerically advantageous over the brute force approach consisting in sampling first the continuous density to solve next a discrete OT problem.
Lastly, for continuous optimal transport, we propose a novel method which makes use of an expansion of the dual variables in a reproducing kernel Hilbert space (RKHS). This allows us for the first time to compute with a converging algorithm OT distances between two arbitrary densities, under the assumption that the two potentials belong to such an RKHS.


\textbf{Notations.}
In the following we consider two metric spaces $\X$ and $\Y$.
We denote by $\M_+^1(\X)$ the set of positive Radon probability measures on $\X$, and $\Cc(\X)$ the space of continuous functions on $\X$. Let $\mu \in \M_+^1(\X)$, $\nu \in \M_+^1(\Y)$, we define 
\eq{ 
	\Pi(\mu,\nu) \eqdef \enscond{\pi \in \M_+^1(\XY) }{ \forall (A,B) \subset \X \times \Y, \pi(A \times \Y) = \mu(A), \pi(\X \times B) = \nu(B)  }
} 
the set of joint probability measures on $\XY$ with marginals $\mu$ and $\nu$.
The Kullback-Leibler divergence between joint probabilities is defined as
\eq{\textstyle
	\forall (\pi,\xi) \in \M_+^1(\X \times \Y)^2, \quad	
	\KL(\pi|\xi) \eqdef \int_{\XY} \big(
		\log\big(\frac{\d\pi}{\d\xi}(x,y)\big) - 1 
	\big) \d\pi(x,y),
}
where we denote $\tfrac{\d\pi}{\d\xi}$ the relative density of $\pi$ with respect to $\xi$, 
and by convention $\KL(\pi|\xi) \eqdef +\infty$ if $\pi$ does not have a density with respect to $\xi$. 
The Dirac measure at point $x$ is $\delta_x$.
For a set $C$, $\iota_C(x)=0$ if $x \in C$ and $\iota_C(x)=+\infty$ otherwise. 
The probability simplex of $N$ bins is $\Sigma_N = \enscond{\muvect \in \R_+^N}{ \sum_i \muvect_i = 1}$.
Element-wise multiplication of vectors is denoted by $\odot$ and $K^\top$ denotes the transpose of a matrix $K$.
We denote $\ones_N = (1,\ldots,1)^\top \in \R^N$ and $\zeros_N = (0,\ldots,0)^\top \in \R^N$.


\section{Optimal Transport: Primal, Dual and Semi-dual Formulations}
\label{sec:OT}

\vspace*{-.101cm}

We consider the optimal transport problem between two measures $\mu \in \M_+^1(\X)$ and $\nu \in \M_+^1(\Y)$, defined on metric spaces $\X$ and $\Y$. No particular assumption is made on the form of $\mu$ and $\nu$, we simply assume that they both can be sampled from to be able to apply our algorithms.


\textbf{Primal, Dual and Semi-dual Formulations.}
The Kantorovich formulation~\cite{Kantorovich42} of OT and its entropic regularization~\cite{CuturiSinkhorn} can be conveniently written in a single convex optimization problem as follows
\eql{\label{primal}\tag{$\Pp_\epsilon$}
	\forall (\mu,\nu) \in \Mm_+^1(\X) \times \Mm_+^1(\Y), \,
	\Weps(\mu,\nu) \eqdef \min_{\pi \in\Pi(\mu,\nu) } \int_{\XY} c(x,y) \d \pi(x,y) + \epsilon \KL(\pi|\mu\otimes\nu).
}
Here $c \in \Cc(\X \times \Y)$ and $c(x,y)$ should be interpreted as the ``ground cost'' to move a unit of mass from $x$ to $y$. 
This $c$ is typically application-dependent, and reflects some prior knowledge on the data to process. We refer to the introduction for a list of previous work where various examples (in imaging, vision, graphics or machine learning) of such costs are given.

When $\X=\Y$, $\epsilon=0$ and $c=d^p$ for $p \geq 1$, where $d$ is a distance on $\X$, then $W_0(\mu,\nu)^{\frac{1}{p}}$ is known as the $p$-Wasserstein distance on $\Mm_+^1(\X)$. Note that this definition can be used for any type of measure, both discrete and continuous.
%
When $\epsilon>0$, problem~\eqref{primal} is strongly convex, so that the optimal $\pi$ is unique, and algebraic properties of the $\KL$ regularization result in computations that can be tackled using the Sinkhorn algorithm~\cite{CuturiSinkhorn}.


For any $c \in \Cc(\X \times \Y)$, we define the following constraint set 
\eq{
	U_c \eqdef \enscond{(u,v) \in \Cc(\X) \times \Cc(\Y) }{\forall (x,y) \in \X \times \Y, u(x) + v(y) \leq c(x,y)}, 
} 
and define its indicator function as well as its ``smoothed'' approximation 
\eql{\label{eq-defn-Uc}
	\iota_{U_c}^\epsilon(u,v) \eqdef \choice{
		\iota_{U_c}(u,v) \qifq \epsilon=0, \\
		\epsilon \int_{\XY}  \exp(\frac{u(x)+v(y) - c(x,y)}\epsilon) \d\mu(x) \d\nu(y) \qifq \epsilon>0.
	}
}
For any $v \in \Cc(\Y)$, we define its $c$-transform and its ``smoothed'' approximation
\eql{\label{eq-c-transform}
	\forall x \in \X, \quad
	v^{c,\epsilon}(x) \eqdef 
	\choice{
			\umin{y \in \Y} c(x,y) - v(y) \qifq \epsilon=0, \\
			- \epsilon \log \left( \int_{\Y} \exp(\frac{v(y) - c(x,y)}\epsilon)\d\nu(y) \right) 
			\qifq \epsilon>0.
	}
}

The proposition below describes two dual problems. It is central to our analysis and paves the way for the application of stochastic optimization methods. 

\begin{proposition}[Dual and semi-dual formulations]
\label{prop_equivalence} For $\epsilon \geq 0$, one has 
\begin{align}
	\label{dual}\tag{$\Dd_\epsilon$}
	\Weps(\mu,\nu) 	
		&= \umax{u \in \Cc(\X),v \in \Cc(\Y)} 
		\Feps(u,v) \eqdef  \int_{\X} u(x) \d\mu(x) + \int_{\Y} v(y) \d\nu(y) - \iota_{U_c}^\epsilon(u,v), \\[-.1cm]
	\label{marg_dual}\tag{$\Ss_\epsilon$}
		&= \umax{v \in \Cc(\Y)} 
		\Heps(v) \eqdef  \int_{\X} v^{c,\epsilon}(x) \d\mu(x) + \int_{\Y} v(y) \d\nu(y) - \epsilon,
\end{align}
where $\iota_{U_c}^\epsilon$ is defined in~\eqref{eq-defn-Uc} and $v^{c,\epsilon}$ in~\eqref{eq-c-transform}. 
Furthermore, $u$ solving~\eqref{dual} is recovered from an optimal $v$ solving \eqref{marg_dual} as $u = v^{c,\epsilon}$.
For $\epsilon>0$, the solution $\pi$ of~\eqref{primal} is recovered from any $(u,v)$ solving \eqref{dual} as $\d\pi(x,y) = \exp(\frac{u(x)+v(y)-c(x,y)}{\epsilon}) \d\mu(x) \d\nu(y)$.
\end{proposition}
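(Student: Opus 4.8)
The plan is to derive both dual problems from Lagrangian duality applied to the primal~\eqref{primal}, relaxing the two marginal constraints that define $\Pi(\mu,\nu)$ by continuous multipliers $u \in \Cc(\X)$ and $v \in \Cc(\Y)$, and then to obtain the semi-dual~\eqref{marg_dual} by performing the partial maximization over $u$ in closed form. The convenient feature I would exploit is that both the inner minimization over $\pi$ and the subsequent maximization over $u$ decouple pointwise, so each reduces to an elementary scalar optimization.

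I would treat $\epsilon>0$ first. Writing the density $r \eqdef \tfrac{\d\pi}{\d(\mu\otimes\nu)}$ and attaching $u,v$ to the marginal constraints, the Lagrangian reads $\int (c(x,y)-u(x)-v(y))\,r\,\d\mu\d\nu + \epsilon\int(\log r-1)\,r\,\d\mu\d\nu + \int u\,\d\mu + \int v\,\d\nu$. Minimizing the integrand pointwise over $r\geq 0$ gives the first-order condition $c-u-v+\epsilon\log r = 0$, hence $r = \exp(\tfrac{u(x)+v(y)-c(x,y)}{\epsilon})$, which is precisely the claimed form of the optimal $\pi$. Substituting this $r$ collapses the bracketed integrand to $-\epsilon\,r$, so the dual value equals $\int u\,\d\mu + \int v\,\d\nu - \epsilon\int\exp(\tfrac{u(x)+v(y)-c(x,y)}{\epsilon})\d\mu\d\nu = \Feps(u,v)$, which is~\eqref{dual}. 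The case $\epsilon=0$ is classical Kantorovich duality: the inner minimization over $r\geq 0$ returns $-\infty$ unless $u(x)+v(y)\leq c(x,y)$, which reproduces the indicator $\iota_{U_c}$, leaving the linear objective $\int u\,\d\mu + \int v\,\d\nu$.

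For the semi-dual I would freeze $v$ and maximize $\Feps(\cdot,v)$ over $u$. For $\epsilon>0$ this is again pointwise in $x$: setting the derivative in $u(x)$ to zero yields $\int_\Y \exp(\tfrac{u(x)+v(y)-c(x,y)}{\epsilon})\d\nu(y)=1$, whose unique solution is exactly $u(x)=v^{c,\epsilon}(x)$, establishing the stated recovery of $u$. Inserting $u=v^{c,\epsilon}$ makes the inner $\Y$-integral equal $1$ for $\mu$-almost every $x$, so $\iota_{U_c}^\epsilon(v^{c,\epsilon},v)=\epsilon$ and $\Feps(v^{c,\epsilon},v)=\int v^{c,\epsilon}\,\d\mu+\int v\,\d\nu-\epsilon=\Heps(v)$; maximizing over $v$ then gives~\eqref{marg_dual}. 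For $\epsilon=0$ the maximization of $\int u\,\d\mu$ under $u(x)+v(y)\leq c(x,y)$ is attained at the largest feasible $u$, namely $u(x)=\umin{y\in\Y}c(x,y)-v(y)=v^{c,0}(x)$, and the additive constant $-\epsilon$ disappears.

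The one genuinely delicate point is \emph{strong duality}: the absence of a duality gap and the legitimacy of exchanging the primal $\min$ with the dual $\max$. In the discrete setting this is finite-dimensional convex duality (linear programming duality when $\epsilon=0$) and is immediate. In the continuous setting one must invoke an infinite-dimensional duality theorem over $\Cc(\X)\times\Cc(\Y)$ together with a constraint qualification guaranteeing that the marginal constraints are qualified; the strict convexity contributed by the $\KL$ term when $\epsilon>0$ additionally yields uniqueness of the primal minimizer $\pi$. I expect this to be the crux of a fully rigorous argument, and would either apply the Fenchel--Rockafellar theorem directly or appeal to the established duality results for optimal transport, the formal Lagrangian computation above being otherwise elementary.
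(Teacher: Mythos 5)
Your proposal follows essentially the same route as the paper's (very terse) proof: Fenchel--Rockafellar/Lagrangian duality to pass from~\eqref{primal} to~\eqref{dual}, then elimination of $u$ via the first-order optimality condition to obtain~\eqref{marg_dual} and the relation $u = v^{c,\epsilon}$. Your version simply fills in the pointwise computations and correctly flags the strong-duality step as the part requiring an infinite-dimensional duality theorem, which the paper dispatches by citation.
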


\begin{proof}
Problem \eqref{dual} is the convex dual of~\eqref{primal}, and is derived using Fenchel-Rockafellar's theorem. 
The relation between $u$ and $v$ is obtained by writing the first order optimality condition for $v$ in \eqref{dual}. Plugging this expression back in \eqref{dual} yields \eqref{marg_dual}.
\end{proof}

Problem~\eqref{primal} is called the primal while~\eqref{dual} is its associated dual problem. We refer to~\eqref{marg_dual} as the ``semi-dual'' problem, because in the special case $\epsilon=0$,~\eqref{marg_dual} boils down to the so-called semi-discrete OT problem~\cite{AurenhammerHA98}. Both dual problems are concave maximization problems.   
The optimal dual variables $(u,v)$---known as Kantorovitch potentials---are not unique, since for any solution $(u, v)$ of~\eqref{dual}, $(u + \lambda, v - \lambda)$ is also a solution for any $\lambda \in \R$. When $\epsilon>0$, they can be shown to be unique up to this scalar translation \cite{CuturiSinkhorn}.
We refer to \NIPSvsARXIV{the supplementary material}{Appendix~\ref{sec-appendix-convergence}} for a discussion (and proofs) of the convergence of the solutions of~\eqref{primal},~\eqref{dual} and~\eqref{marg_dual} towards those of $(\Pp_0)$, $(\Dd_0)$ and $(\Ss_0)$ as $\epsilon\rightarrow 0$.



A key advantage of~\eqref{marg_dual} over~\eqref{dual} is that, when $\nu$ is a discrete density (but not necessarily $\mu$), then~\eqref{marg_dual} is a finite-dimensional concave maximization problem, which can thus be solved using stochastic programming techniques, as highlighted in Section~\ref{sec-semidiscr}. By contrast, when both $\mu$ and $\nu$ are continuous densities, these dual problems are intrinsically infinite dimensional, and we propose in Section~\ref{sec-continuous} more advanced techniques based on RKHSs.


\textbf{Stochastic Optimization Formulations.}
The fundamental property needed to apply stochastic programming is that both dual problems \eqref{dual} and \eqref{marg_dual} must be rephrased as minimizing expectations:
\begin{equation}\label{eq-expectation}
	\forall \epsilon > 0, \: \Feps(u,v) = \E_{X,Y}\left[ \feps(X,Y,u,v) \right] 
	\qandq
	\forall \epsilon \geq 0, \: \Heps(v) = \E_{X}\left[ \heps(X,v) \right], 
\end{equation}
where the random variables $X$ and $Y$ are independent and distributed according to $\mu$ and $\nu$ respectively, 
and where, for $(x,y) \in \X \times \Y$ and $(u,v) \in \Cc(\X) \times \Cc(\Y)$, 
\begin{align*}
	\forall \epsilon>0, \quad 
		\feps(x,y,u,v) & \eqdef 
		u(x) + v(y) - \epsilon \exp \Big(\frac{u(x)+v(y) - c(x,y)}\epsilon \Big), \\
	\forall \epsilon \geq 0, \quad 
		\heps(x,v) & \eqdef  
		\int_{\Y} v(y) \d\nu(y) + v^{c,\epsilon}(x) - \epsilon.
\end{align*}
This reformulation is at the heart of the methods detailed in the remainder of this article. 
Note that the dual problem~\eqref{dual} cannot be cast as an unconstrained expectation maximization problem when $\epsilon = 0$, because of the constraint on the potentials which arises in that case.

%



\section{Discrete Optimal Transport}
\label{sec-discr}

\vspace*{-.101cm}

We assume in this section that both $\mu$ and $\nu$ are discrete measures, i.e. finite sums of Diracs, of the form 
$
	\mu = \sumi \muvect_i \delta_{x_i}
	\qandq 
	\nu = \sumj \nuvect_j \delta_{y_j}, 
$
where $(x_i)_i \subset \X$ and $(y_j)_j \subset \Y$, and the histogram vector weights are $\muvect \in \Sigma_I$ and $\nuvect \in \Sigma_J$.  These discrete measures may come from the evaluation of continuous densities on a grid, counting features in a structured object, or be empirical measures based on samples. This setting is relevant for several applications, including all known applications of the earth mover's distance. We show in this section that our stochastic formulation can prove extremely efficient to compare measures with a large number of points.

\textbf{Discrete Optimization and Sinkhorn.}
In this setup, the primal \eqref{primal}, dual \eqref{dual} and semi-dual~\eqref{marg_dual} problems can be rewritten as finite-dimensional optimization problems involving the cost matrix $\cmat \in \R_+^{I \times J}$ defined by $\cmat_{i,j} = c(x_i,y_j)$:
\begin{align}
	\tag{$\mathcal{\bar P}_\epsilon$}
 	\Weps(\mu,\nu) &= \!\!\umin{\pivect \in \R_+^{I \times J}}
		\textstyle\Big\{	
				\sum_{i,j} \cmat_{i,j} \pivect_{i,j} + 
				\epsilon \sum_{i,j} \Big( \log \frac{\pivect_{i,j}}{\nuvect_i \muvect_j} - 1 \Big) \pivect_{i,j}
			\: ; \:
			\pivect \ones_J = \muvect ,  \pivect^\top \ones_I = \nuvect 
		\Big\}, 
\\[-2mm]
	\tag{$\mathcal{\bar D}_\epsilon$}\label{eq-dual-discr}
		&= \!\!\umax{\uvect \in \R^I,\vvect \in \R^J} 
			\textstyle\sum_{i}  \uvect_i \muvect_i +\sum_{j} \vvect_j \nuvect_j 
			- \epsilon \sum_{i,j}   \exp\Big( \frac{\uvect_i+\vvect_j - \cmat_{i,j}}{\epsilon} \Big) \muvect_i \nuvect_j,  \:\text{(for $\epsilon>0$)}
\\
	\tag{$\mathcal{\bar S}_\epsilon$}\label{eq-semidual-discr}
		&= \textstyle \:\umax{\vvect \in \R^J} 
			\bar H_\epsilon(\vvect) = \sum_{i\in I} \bar h_\epsilon(x_i,\vvect) \muvect_i, \qwhereq			\\[-2mm]
	\label{eq-defn-barh} &\bar h_\epsilon(x,\vvect) = 
	\sum_{j\in J} \vvect_j \nuvect_j +
	\begin{cases} 
		  - \epsilon \log(\sum_{j\in J} \exp( \frac{\vvect_j - c(x,y_j)}{\epsilon} ) \nuvect_j) - \epsilon 
			&\text{\qquad  if } \epsilon>0,\\
  		  \min_{j} \left( c(x,y_j ) - \vvect_j\right)  
			&\text{\qquad if } \epsilon=0, 
  	\end{cases}
\end{align}

The state-of-the-art method to solve the discrete regularized OT (i.e. when $\epsilon>0$) is Sinkhorn's algorithm~\cite[Alg.1]{CuturiSinkhorn}, which has  linear convergence rate~\cite{franklin1989scaling}. It corresponds to a block coordinate maximization, successively optimizing~\eqref{eq-dual-discr} with respect to either $\uvect$ or $\vvect$.
Each iteration of this algorithm is however costly, because it requires a matrix-vector multiplication. Indeed, this corresponds to a ``batch'' method where all the samples $(x_i)_i$ and $(y_j)_j$ are used at each iteration, which has thus complexity $O(N^2)$ where $N = \max(I,J)$.  
We now detail how to alleviate this issue using online stochastic optimization methods.



\textbf{Incremental Discrete Optimization when $\varepsilon>0$.}
Stochastic gradient descent (SGD), in which an index $k$ is drawn from distribution $\muvect$ at each iteration can be used to minimize the finite sum that appears in in~\ref{eq-semidual-discr}. The gradient of that term $\bar h_\epsilon(x_k,\cdot)$ is
\begin{equation}\label{eq-discr-grad-desc}
	\nabla_v \bar h_\epsilon(x,\vvect) = \nuvect - \chi_{ ( c(x,y_\ell ) - \vvect_\ell )_\ell }^\epsilon,
	\notag\qwhereq
	\forall \epsilon>0, \:
	(\chi_r^\epsilon)_j \eqdef 
	 { e^{-\frac{r_j}{\epsilon}}\nuvect_j } \Big( \textstyle \sum_{\ell}  e^{-\frac{r_\ell}{\epsilon}}\nuvect_\ell \Big)^{-1}.
\end{equation}
This gradient can then be used as a proxy for the full gradient in a standard gradient ascent step to maximize $\bar H_\epsilon$. Note that in the case where $\varepsilon=0$, one can define $\chi_r^0 \eqdef \mathbf{e}_{\argmin_\ell r_\ell}$, where $\mathbf{e}_\ell$ is the $\ell^{\text{th}}$ canonical basis vector. When the set $\argmin_\ell r_\ell$ is not a singleton, one can arbitrarily choose any element from this set, and $\nabla_v \bar h_\epsilon(x,\vvect)$ is then a sub-gradient of the convex function $\bar h_\epsilon(x,\cdot)$.
%


\begin{wrapfigure}{R}{0.43\textwidth}
	\begin{minipage}[t]{\linewidth}
	  \vspace{-.8cm}
	  \begin{algorithm}[H]
	    \caption{\label{algoSAG}SAG for Discrete OT}
	 \begin{algorithmic}
	 \Require $C$ 
	\Ensure $\vvect$ 
	\State $\vvect \leftarrow \zeros_J$, $\mathbf{d} \leftarrow \zeros_J$, $\forall i, \mathbf{g}_i \leftarrow \zeros_{J}$ 
	\For{$k = 1,2,\dots$}
		\State Sample $i \in \{1,2,\dots,I\}$ uniform.
		\State $\mathbf{d} \leftarrow \mathbf{d} - \mathbf{g}_i $
		\State $\mathbf{g}_i \leftarrow \muvect_i \nabla_v \bar h_\epsilon(x_i,\vvect)$
		\State $\mathbf{d} \leftarrow \mathbf{d} + \mathbf{g}_i$ ; $\vvect \leftarrow \vvect + C \mathbf{d}$	
	\EndFor
	\end{algorithmic}
	  \end{algorithm}
		  \vspace{-.6cm}
	\end{minipage}
  \end{wrapfigure}
When $\varepsilon>0$, the finite sum appearing in~\eqref{eq-semidual-discr} suggests to use incremental gradient methods---rather than purely stochastic ones---which are known to converge faster than SGD. We propose to use the stochastic averaged gradient (SAG)~\cite{SAG}. As SGD, SAG operates at each iteration by sampling a point $x_k$ from $\mu$, to compute the gradient corresponding to that sample for the current estimate $\vvect$. Unlike SGD, SAG keeps in memory a copy of that gradient, until that particular point is sampled again, at which point the copy is updated. Unlike SGD, SAG applies a \emph{fixed} length update, in the direction of the \emph{average} of all gradients stored so far, which provides a better proxy of the gradient corresponding to the entire sum. This improves the convergence rate to $|\bar H_\epsilon(\vvect_\epsilon^\star)-\bar H_\epsilon(\vvect_k)| = O(1/k)$, where $\vvect_\epsilon^\star$ is a minimizer of $\bar H_\epsilon$, at the expense of storing the gradient for each of the $I$ points. This expense can be mitigated by considering mini-batches instead of individual points. Note finally that the SAG algorithm is adaptive to strong-convexity and will be linearly convergent around the optimum. The pseudo-code for SAG is provided in Algorithm \ref{algoSAG}, and we defer more details on SGD for Section~\ref{sec-semidiscr}, in which it will be shown to play a crucial role. Note that the Lipschitz constant of all these terms is upperbounded by $L=\max_i\muvect_i/\varepsilon$.

%
%


\textbf{Numerical Illustrations on Bags of Word-Embeddings.}
Comparing texts using a Wasserstein distance on their representations as clouds of word embeddings (so called \emph{word mover's distances}) has been recently shown to yield state-of-the-art accuracy for text classification~\cite{kusner2015doc}. The authors of ~\cite{kusner2015doc} have however highlighted that this accuracy comes at a large computational cost. We test our stochastic approach to discrete OT in this scenario, using the complete works of 35 authors\NIPSvsARXIV{(names in supplementary material)}{\footnote{
The list of authors we consider is:
Keats, Cervantes, Shelley, Woolf, Nietzsche, Plutarch, Franklin, Coleridge, Maupassant, Napoleon, Austen, Bible, Lincoln, Paine, Delafontaine, Dante, Voltaire, Moore, Hume, Burroughs, Jefferson, Dickens, Kant, Aristotle, Doyle, Hawthorne, Plato, Stevenson, Twain, Irving, Emerson, Poe, Wilde, Milton, Shakespeare.}}. We use Glove word embeddings~\cite{Pennington14}, namely $\mathcal{X}=\mathcal{Y}=\R^{300}$. We discard all most frequent $1,000$ words that appear at the top of the file \texttt{glove.840B.300d} provided on the authors' website. We sample $N=20,000$ words (found within the remaining huge dictionary of relatively rare words) from each authors' complete work. Each author is thus represented as a cloud of $20,000$ points in $\R^{300}$. The cost function $c$ between the word embeddings is the squared-Euclidean distance, re-scaled so that it has a unit empirical median on $2,000$ points sampled randomly among all vector embeddings. We set $\varepsilon$ to $0.01$. We compute all $(35\times34/2=595)$ pairwise regularized Wasserstein distances using both the Sinkhorn algorithm and SAG. Following the recommendations in~\cite{SAG}, SAG's stepsize is tested for 3 different settings, $1/L,3/L$ and $5/L$. The convergence of each algorithm is measured by computing the $\ell_1$ norm of the gradient of the full sum (which also corresponds to the marginal violation of the primal transport solution that can be recovered with these dual variables\cite{CuturiSinkhorn}), as well as the $\ell_2$ norm of the deviation to the optimal scaling found after $4,000$ passes for any of the three methods. Results are presented in Fig.~\ref{fig:SAGvsSinkhorn} and suggest that SAG can be more than twice faster than Sinkhorn on average for all tolerance thresholds. Note finally that SAG retains exactly the same parallel properties as Sinkhorn: all of these computations can be streamlined on GPUs. We used 4 Tesla K80 cards to compute both SAG and Sinkhorn results. For each distance computation, all $4,000$ passes (far more than is needed to approximate only the distance) take less than 2 minutes.

\begin{figure}
\hskip-.5cm
\centering
\includegraphics[trim=3.8cm 0 3.8cm 0,clip,width=1.02\textwidth]{./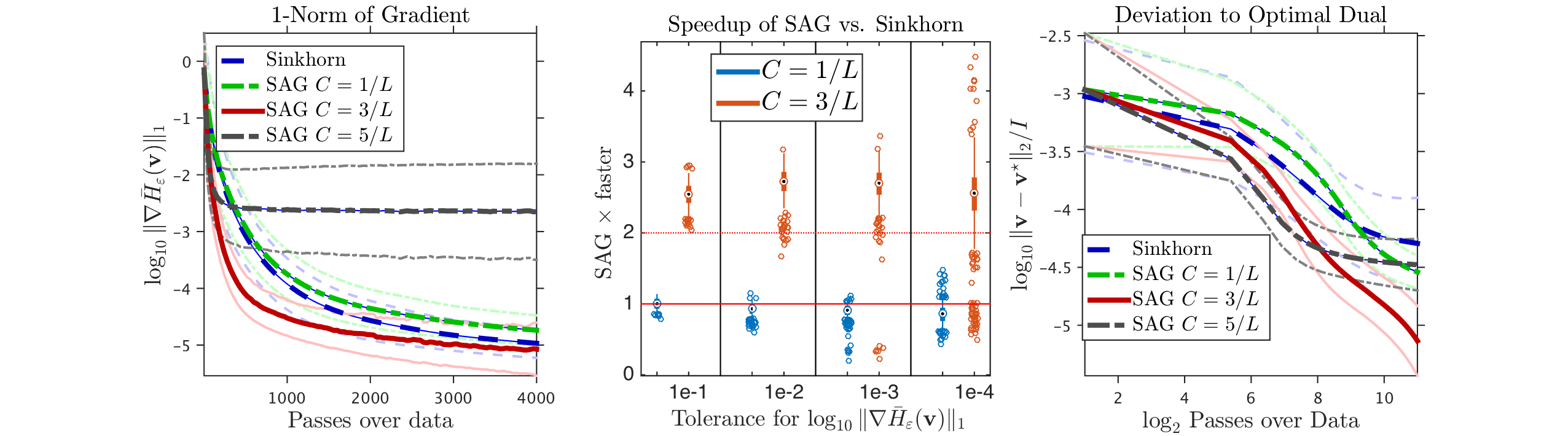}\vspace{-1mm}
\caption{We compute all 595 pairwise word mover's distances~\cite{kusner2015doc} between 35 very large corpora of text, each represented as a cloud of $I=20,000$ word embeddings. We compare the Sinkhorn algorithm with SAG, tuned with different stepsizes. Each pass corresponds to a $I\times I$ matrix-vector product. We used minibatches of size $200$ for SAG. \emph{Left plot}: convergence of the gradient $\ell_1$ norm (average and $\pm$ standard deviation error bars). A stepsize of $3/L$ achieves a substantial speed-up of $\approx 2.5$, as illustrated in the boxplots in the \emph{center plot}. Convergence to $\vvect^\star$ (the best dual variable across all variables after $4,000$ passes) in $\ell_2$ norm is given in the \emph{right plot}, up to $2,000\approx 2^{11}$ steps.}
\label{fig:SAGvsSinkhorn}
\vskip-.3cm
\end{figure}


\section{Semi-Discrete Optimal Transport}
\label{sec-semidiscr}

In this section, we assume that $\mu$ is an arbitrary measure (in particular, it needs not to be discrete) and that $\nu = \sumj \nuvect_j \delta_{y_j}$ is a discrete measure. This corresponds to the semi-discrete OT problem~\cite{AurenhammerHA98,Merigot11}. The semi-dual problem~\eqref{marg_dual} is then a finite-dimensional maximization problem, written in expectation form as
$\Weps(\mu,\nu) = \umax{\vvect \in \R^J} \E_{X}\left[ \bar h_\epsilon(X,\vvect) \right]$
where $X \sim \mu$ and $\bar h_\epsilon$ is defined in~\eqref{eq-defn-barh}. 

\textbf{Stochastic Semi-discrete Optimization.}
\begin{wrapfigure}{R}{0.38\textwidth}
	\begin{minipage}[t]{\linewidth}
	  \vspace{-.8cm}
	 \begin{algorithm}[H]
	    \caption{\label{semidiscreteSGD}Averaged SGD for Semi-Discrete OT}
	    \begin{algorithmic}
	\Require $C$ 
	\Ensure $\vvect$ 
	\State $\tilde\vvect \leftarrow \zeros_J$ ,  $\vvect \leftarrow \tilde\vvect$
	\For{$k = 1,2,\dots$}
		\State Sample $x_k$ from $\mu$
		\State $\tilde\vvect \leftarrow \tilde\vvect + \frac{C}{\sqrt{k}} \nabla_v \bar h_\epsilon(x_k,\tilde\vvect)$
	    	\State $\vvect \leftarrow \frac{1}{k} \tilde\vvect + \frac{k-1}{n} \vvect$
	\EndFor
	\end{algorithmic}
	  \end{algorithm}
		  \vspace{-.8cm}
	\end{minipage}
  \end{wrapfigure}
Since the expectation is taken over an arbitrary measure, neither Sinkhorn algorithm nor incremental algorithms such as SAG can be used. An alternative is to approximate $\mu$ by an empirical measure 
$\hat{\mu}_N\eqdef \frac{1}{N}\sum_{i=1}^N \delta_{x_i}$ where $(x_i)_{i=1,\dots,N}$ are i.i.d samples from $\mu$, and computing $W_\epsilon(\hat \mu_N,\nu)$ using the discrete methods (Sinkhorn or SAG) detailed in Section~\ref{sec-discr}. However this introduces a discretization noise in the solution as the discrete problem is now different from the original one and thus has a different solution.
Averaged SGD on the other hand does not require $\mu$ to be discrete and is thus perfectly adapted to this semi-discrete setting. The algorithm is detailed in Algorithm~\ref{semidiscreteSGD} (the expression for $\nabla \bar h_\epsilon$ being given in Equation~4). The convergence rate is $O(1/\sqrt{k})$ thanks to averaging $\tilde\vvect_k$~\cite{polyak1992acceleration}.


\textbf{Numerical Illustrations.} Numerical simulations are performed in $\X=\Y=\R^3$. Here $\mu$ is a Gaussian mixture (i.e. a continuous density) and the  measure $\nu = \frac{1}{J} \sum_{j=1}^J \delta_{y_j}$ with $J=10$ and $(x_j)_j$ are i.i.d. samples drawn from another gaussian mixture. Each mixture is composed of three gaussians whose means are drawn randomly in $[0,1]^3$, and their correlation matrices are constructed as $\Sigma = 0.01 (R^T + R)+ 3 I_3 $ where $R$ is a $3\times 3$ matrix with random entries in $[0,1]$. 
In the following, we denote $\vvect_\epsilon^\star$ a solution of~\eqref{marg_dual}, which is approximated numerically by running SGD with for a large enough number of iterations.


Figure~\ref{fig:SGDvsSAG}~(a) shows the evolution of $\norm{\vvect_k-\vvect_0^\star}_2/\norm{\vvect_0^\star}_2$ as a function of $k$.
It highlights the influence of the regularization parameters $\epsilon$ on the iterates of SGD. 
While the regularized iterates converge faster, they do not converge to the correct unregularized solution. 
This figure also illustrates the convergence theorem of solution of $(\Ss_\epsilon)$ toward those $(\Ss_0)$ when $\epsilon \rightarrow 0$, which can be found in \NIPSvsARXIV{the supplementary material}{Appendix~\ref{sec-appendix-convergence}}.

Figure~\ref{fig:SGDvsSAG}~(b) shows the evolution of $\norm{\vvect_k-\vvect_\epsilon^\star}_2/\norm{\vvect_\epsilon^\star}_2$ averaged over 40 runs as a function of $k$, for a fixed regularization parameter value $\epsilon=10^{-2}$. 
It compares SGD to SAG using different numbers $N$ of samples for the empirical measures $\hat{\mu}_N$. 
While SGD converges to the true solution of the semi-discrete problem, the solution computed by SAG is biased because of the approximation error which comes from the discretization of $\mu$. This error decreases when the sample size $N$ is increased, as the approximation of $\mu$ by $\hat{\mu}_N$ becomes more accurate.

\begin{figure}
\centering
\begin{tabular}{@{}c@{}c@{}}
\includegraphics[trim=.5cm 0 .5cm 0,clip,width=.49\linewidth]{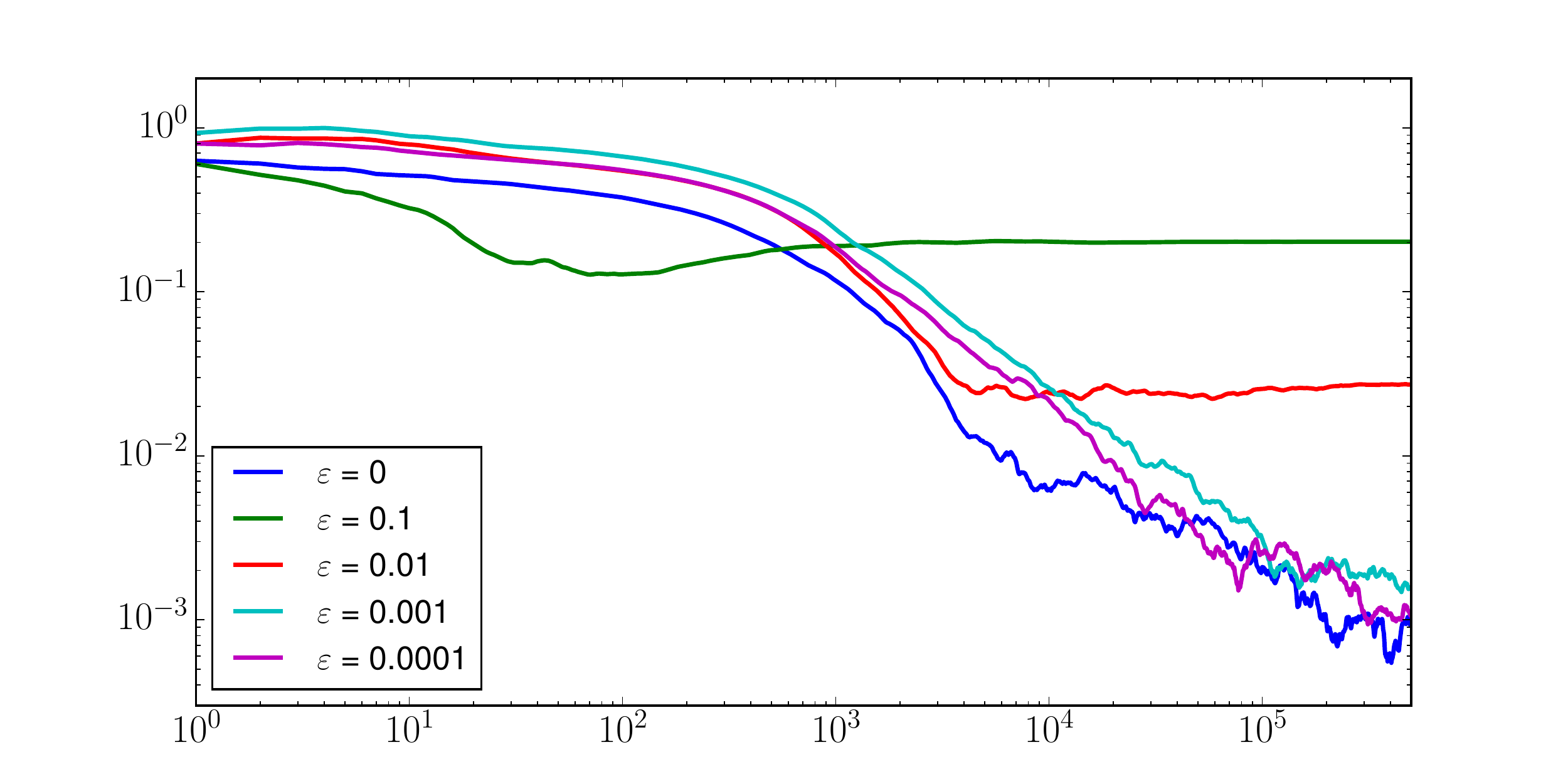} &
\includegraphics[trim=.5cm 0 .5cm 0,clip,width=.49\linewidth]{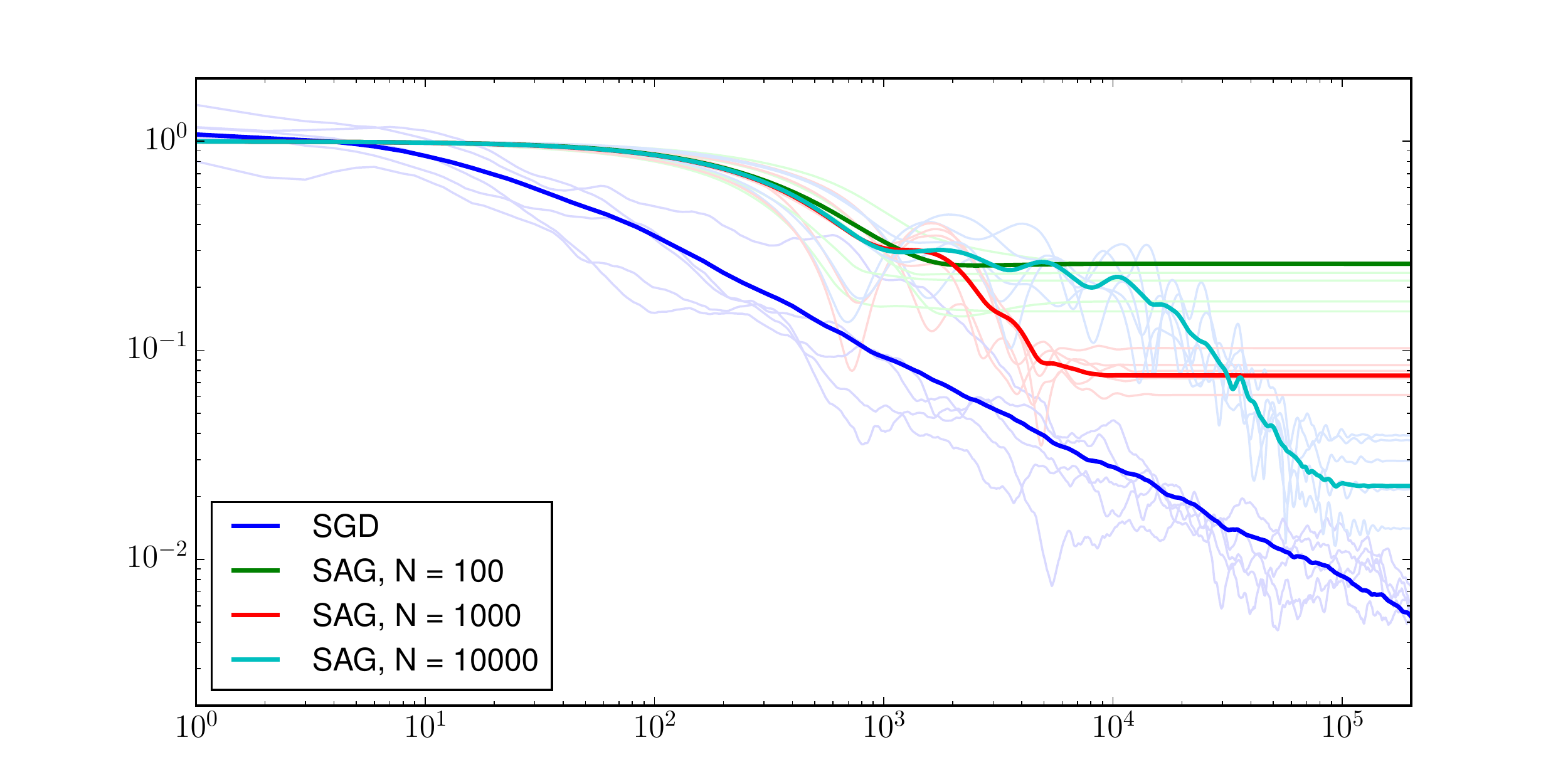} \\[-3mm]
(a) SGD & (b) SGD vs. SAG
\end{tabular}\vspace{-1mm}
\caption{ 
(a) Plot of $\norm{\vvect_k-\vvect_0^\star}_2/\norm{\vvect_0^\star}_2$ as a function of $k$, for SGD and different values of $\epsilon$ ($\epsilon=0$ being un-regularized). 
(b) Plot of $\norm{\vvect_k-\vvect_\epsilon^\star}_2/\norm{\vvect_\epsilon^\star}_2$ averaged over 40 runs as a function of $k$, for SGD and SAG with different number $N$ of samples, for regularized OT using $\epsilon=10^{-2}$.
\label{fig:SGDvsSAG}}
\end{figure}


\section{Continuous optimal transport using RKHS}
\label{sec-continuous}

In the case where neither $\mu$ nor $\nu$ are discrete, problem~\eqref{marg_dual} is infinite-dimensional, so it cannot be solved directly using stochastic SGD. We propose in this section to solve the initial dual problem~\eqref{dual}, using expansions of the dual variables in two reproducing kernel Hilbert spaces (RKHS).
Recall that contrarily to the methods from previous sections, we can only solve the regularized problem here (i.e. $\epsilon>0$), since~\eqref{dual} cannot be cast as an expectation maximization problem when $\epsilon=0$. 


\textbf{Stochastic Continuous Optimization.}
We consider two reproducing kernel Hilbert spaces (RKHS) $\H$ and $\G$ defined on $\X$ and on $\Y$, with kernels $\kappa$ and $\ell$, associated with norms $\| \cdot \|_{\H}$ and $\| \cdot \|_{\G}$. 
Recall the two main properties of RKHS: (a) if $u \in \H$, then $u(x) = \langle u, \kappa(\cdot,x) \rangle_\H$ and  (b) $\kappa(x,x') = \langle \kappa(\cdot,x), \kappa(\cdot,x') \rangle_\H$.
 
 
The dual problem~\eqref{dual} is conveniently re-written in~\eqref{eq-expectation} as the maximization of the expectation of $f^{\epsilon}(X,Y,u,v)$ with respect to the random variables $(X,Y) \sim \mu \otimes \nu$.
The SGD algorithm applied to this problem reads, starting with $u_0=0$ and $v_0=0$, 
\eql{\label{eq-sgd-continuous}
	(u_k,v_k) \eqdef (u_{k-1},v_{k-1}) + \frac{C}{\sqrt{k}} \nabla f_\epsilon(x_k,y_k,u_{k-1},v_{k-1}) \in \H \times \G, 
}
where $(x_k,y_k)$ are i.i.d. samples from $\mu \otimes \nu$. The following proposition shows that these $(u_k,v_k)$ iterates can be expressed as finite sums of kernel functions, and that the coefficients of these expansions enjoy a particularly simple recursion formula. 
 
\begin{proposition}
The iterates $(u_k,v_k)$ defined in~\eqref{eq-sgd-continuous} satisfy 
\eql{\label{eq-sdd-kernel-prop}
	\!(u_k,v_k) \!\eqdef\!
	\sum_{i=1}^k \alpha_i (\kappa(\cdot,x_i), \ell(\cdot,y_i)),
	\text{ where }
	\alpha_i  \eqdef \Pi_{B_r}\!\left( \frac{C}{\sqrt{i}} \Big( 1  - e^{\frac {u_{i-1}(x_i) + v_{i-1}(y_i) - c(x_i,y_i)}{\epsilon}} \Big) \right),
}
where $(x_i,y_i)_{i=1\dots k}$ are i.i.d samples from $\mu \otimes \nu$ and $\Pi_{B_r}$ is the projection on the centered ball of radius $r$. 
If the solutions of ~\eqref{dual} are in the $\H \times \G$ and if $r$ is large enough, the iterates ($u_k$,$v_k$) converge to a solution of ~\eqref{dual}.
\end{proposition}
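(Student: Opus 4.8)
The plan is to establish the proposition in two parts, corresponding to its two sentences: first the explicit kernel expansion formula~\eqref{eq-sdd-kernel-prop}, which is essentially an algebraic unfolding of the SGD recursion, and second the convergence of the iterates, which rests on standard stochastic approximation theory for SGD in Hilbert spaces.

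First I would compute the Fréchet gradient of $f_\epsilon$ with respect to $(u,v) \in \H \times \G$. By the reproducing property $u(x) = \langle u, \kappa(\cdot,x)\rangle_\H$, the gradient of the linear term $u \mapsto u(x)$ is the representer $\kappa(\cdot,x)$, and differentiating the exponential term produces a scalar multiple of that same representer (and symmetrically for $v$ and $\ell(\cdot,y)$). A direct calculation then gives
\eq{
	\nabla f_\epsilon(x,y,u,v) = \Big( 1 - e^{\frac{u(x)+v(y)-c(x,y)}{\epsilon}} \Big)\big( \kappa(\cdot,x), \ell(\cdot,y) \big).
}
Substituting this into the projected SGD step~\eqref{eq-sgd-continuous} shows that each iteration adds a single term $\alpha_k\big(\kappa(\cdot,x_k),\ell(\cdot,y_k)\big)$, with $\alpha_k$ exactly the (projected) coefficient of the statement. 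Starting from $(u_0,v_0)=(0,0)$, a one-line induction on $k$ yields the finite-sum expansion~\eqref{eq-sdd-kernel-prop}. The recursion is self-contained because evaluating $u_{i-1}(x_i)$ and $v_{i-1}(y_i)$ requires only the stored scalars $(\alpha_j)_{j<i}$ together with the kernel values $\kappa(x_j,x_i)$ and $\ell(y_j,y_i)$, again by the reproducing property.

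For the convergence claim I would cast the problem as maximizing the concave functional $\Feps$ over $\H \times \G$ and invoke a standard convergence result for (projected) SGD with step sizes $\gamma_k = C/\sqrt{k}$, giving the $O(1/\sqrt{k})$ rate. The hypotheses to verify are: \emph{(i)} unbiasedness, i.e. the stochastic gradients are unbiased estimates of $\nabla \Feps$, which holds since $(x_k,y_k)\sim\mu\otimes\nu$ and $\Feps = \E[f_\epsilon]$ by~\eqref{eq-expectation}; \emph{(ii)} bounded gradients, which is precisely where the projection $\Pi_{B_r}$ onto the centered ball intervenes—it keeps the coefficients, hence the iterates, bounded, so that under bounded kernels the stochastic gradients remain in a fixed ball of $\H \times \G$; and \emph{(iii)} existence of a maximizer inside the feasible region, which is exactly the assumption that a solution of~\eqref{dual} lies in $\H \times \G$ together with the requirement that $r$ be large enough to contain it.

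The main obstacle is the interplay between the projection radius $r$ and the location of the optimum. The projection is what makes the gradients bounded and lets the stochastic-approximation theorem apply, but too small an $r$ would pull the iterates away from the true maximizer; one must therefore argue that for $r$ large enough the ball constraint is inactive at the optimum and asymptotically along the trajectory, so the projected recursion eventually coincides with the unconstrained one in a neighborhood of the solution. A secondary subtlety is the translation invariance $(u,v)\mapsto(u+\lambda,v-\lambda)$ of $\Feps$. In the RKHS setting this is often benign: if the constant functions do not belong to $\H$ and $\G$ (as for, e.g., Gaussian kernels), the only member of the one-parameter family lying in $\H \times \G$ is $\lambda=0$, so the restriction already selects a unique solution; otherwise the convergence statement should be read modulo this gauge. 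Verifying that the cited SGD theorem delivers convergence of the \emph{iterates} themselves, rather than of function values or running averages, is the remaining technical point to pin down and is where the precise curvature/averaging conditions of the invoked theorem must be matched to the present objective.
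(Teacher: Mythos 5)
Your proposal is correct and takes essentially the same route as the paper: rewrite $u(x)$ and $v(y)$ via the reproducing property so that each SGD step adds a rank-one term $\alpha_k(\kappa(\cdot,x_k),\ell(\cdot,y_k))$, unfold the recursion by induction from $(u_0,v_0)=(0,0)$, and appeal to a standard projected-SGD convergence result (the paper simply cites \cite{dieuleveut2014non}) for the second claim. Your write-up is more explicit than the paper's about the hypotheses of that convergence theorem (unbiasedness, the role of $r$, the translation-invariance gauge), which the paper leaves implicit.
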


\begin{proof}
Rewriting $u(x)$ and $v(y)$ as scalar products in $f^{\epsilon}(X,Y,u,v)$ yields
\eq{ 
	f^{\epsilon}(x,y,u,v) = 
	\dotp{u}{\kappa(x,\cdot)}_{\H} + 
	\dotp{v}{\ell(y,\cdot)}_{\G} - 
	\epsilon \exp \Big(\frac{\langle u, \kappa(x,\cdot) \rangle_{\H} + \langle v, \ell(y,\cdot) \rangle_{\G} - c(x,y)} \epsilon \Big).
}
Plugging this formula in iteration~\eqref{eq-sgd-continuous} yields : $(u_k,v_k) =  u_{k-1} + \alpha_k (\kappa(\cdot,x_k), \ell(\cdot,y_k))$ ,
where $\alpha_k$ is defined in~\eqref{eq-sdd-kernel-prop}. 
Since the parameters $(\alpha_i)_{i<k}$ are not updated at iteration $k$, we get the announced formula.
Putting a bound on the iterates of $\alpha$ by projecting on $B_{r}$ ensures convergence \cite{dieuleveut2014non}.
%
\end{proof}
\begin{wrapfigure}{R}{0.5\textwidth}
	\begin{minipage}[t]{\linewidth}
	  \vspace{-.4cm}
	\begin{algorithm}[H]
	    \caption{Kernel SGD for continuous OT}
	        \label{kernelSGD}
	 \begin{algorithmic}
	\Require  $C$, kernels $\kappa$ and $\ell$ 
	\Ensure $ (\alpha_k,  x_k, y_k)_{k = 1,\dots}$ 
	\For{$k = 1,2,\dots$}
		\State Sample $x_k$ from $\mu$ 
		\State Sample $y_k$ from $\nu$
		\State $u_{k-1}(x_k) \eqdef \sum_{i=1}^{k-1} \alpha_i \kappa(x_k,x_i) $
		\State $v_{k-1}(y_k)  \eqdef \sum_{i=1}^{k-1} \alpha_i \ell(y_k,y_i) $
		\State $\alpha_k \eqdef \frac{C}{\sqrt{k}} \left(1 - e^{\frac{u_{k-1}(x_k)+v_{k-1}(y_k)-c(x_k,y_k)}{\epsilon}}\right)$
	\EndFor
	\end{algorithmic}
	\end{algorithm}
		  \vspace{-.6cm}
	\end{minipage}
  \end{wrapfigure}
The algorithm for kernel SGD is outlined in Algorithm \ref{kernelSGD}. Both potentials $u$ and $v$ are approximated by a linear combination of kernel functions. The main cost is the computation of $u_{k-1}(x_k)  =  \sum_{i=1}^{k-1} \alpha_i \kappa(x_k,x_i)$ and $v_{k-1}(y_k)  =  \sum_{i=1}^{k-1} \alpha_i \ell(y_k,y_i)$ which yields complexity $O(k^2)$. Several methods exist to alleviate the running time complexity of kernel algorithms, e.g. random Fourier features \cite{rahimi2007random} or incremental incomplete Cholesky decomposition~\cite{wu2006incremental}.

Kernels that are associated with dense RHKS are called universal~\cite{SVMbook} and thus can approach any arbitrary potential. When working over Euclidean spaces $\X=\Y=\R^d$ for some $d>0$, a natural choice of universal kernel is the kernel defined by $\kappa(x,x') = \exp(\frac{-\|x-x'\|^2}{\sigma^2})$. Tuning the bandwidth $\sigma$ is crucial to obtain a good convergence of the algorithm. 

Finally, let us note that, while entropy regularization of the primal problem~\eqref{primal} was instrumental to be able to apply semi-discrete methods in Sections~\ref{sec-discr} and~\ref{sec-semidiscr}, this is not the case here. Indeed, since the kernel SGD algorithm is applied to the dual~\eqref{dual}, it is possible to replace $\KL(\pi|\mu\otimes\nu)$ appearing in~\eqref{primal} by other regularizing divergences. A typical example would be a $\chi^2$ divergence $\int_{\X \times \Y} (\frac{\d \pi}{\d\mu\d\nu}(x,y))^2 \d\mu(x)\d\nu(y)$ (with positivity constraints on $\pi$). 


\textbf{Numerical Illustrations.}
We consider optimal transport in 1D between a Gaussian $\mu$ and a Gaussian mixture $\nu$ whose densities are represented in Figure~ \ref{fig:continuous}~(a). Since there is no existing benchmark for continuous transport, we use the solution of the semi-discrete problem $\Weps(\mu,\hat \nu _N)$ with $N = 10^3$ computed with SGD as a proxy for the solution and we denote it by $\hat u^\star$.
We focus on the convergence of the potential $u$, as it is continuous in both problems contrarily to $v$. Figure~\ref{fig:continuous}~(b) represents the plot of ${\norm{\uvect_k-\hat \uvect^\star}_2}/{\norm{\hat \uvect^\star}_2}$ where $\uvect$ is the evaluation of $u$ on a sample $(x_i)_{i=1 \dots N^\prime}$ drawn from $\mu$. This gives more emphasis to the norm on points where $\mu$ has more mass. The convergence is rather slow but still noticeable. The iterates $u_k$ are plotted on a grid for different values of $k$ in Figure~\ref{fig:continuous}~(c), to emphasize the convergence to the proxy $\hat u^\star$. We can see that the iterates computed with the RKHS converge faster where $\mu$ has more mass, which is actually where the value of $u$ has the greatest impact in $\Feps$ ($u$ being integrated against $\mu$).

\begin{figure}
\hskip-.7cm
\begin{tabular}{@{}c@{}c@{}c@{}}
\includegraphics[trim=1.05cm 0 1.05cm 0,clip,width=.35\linewidth]{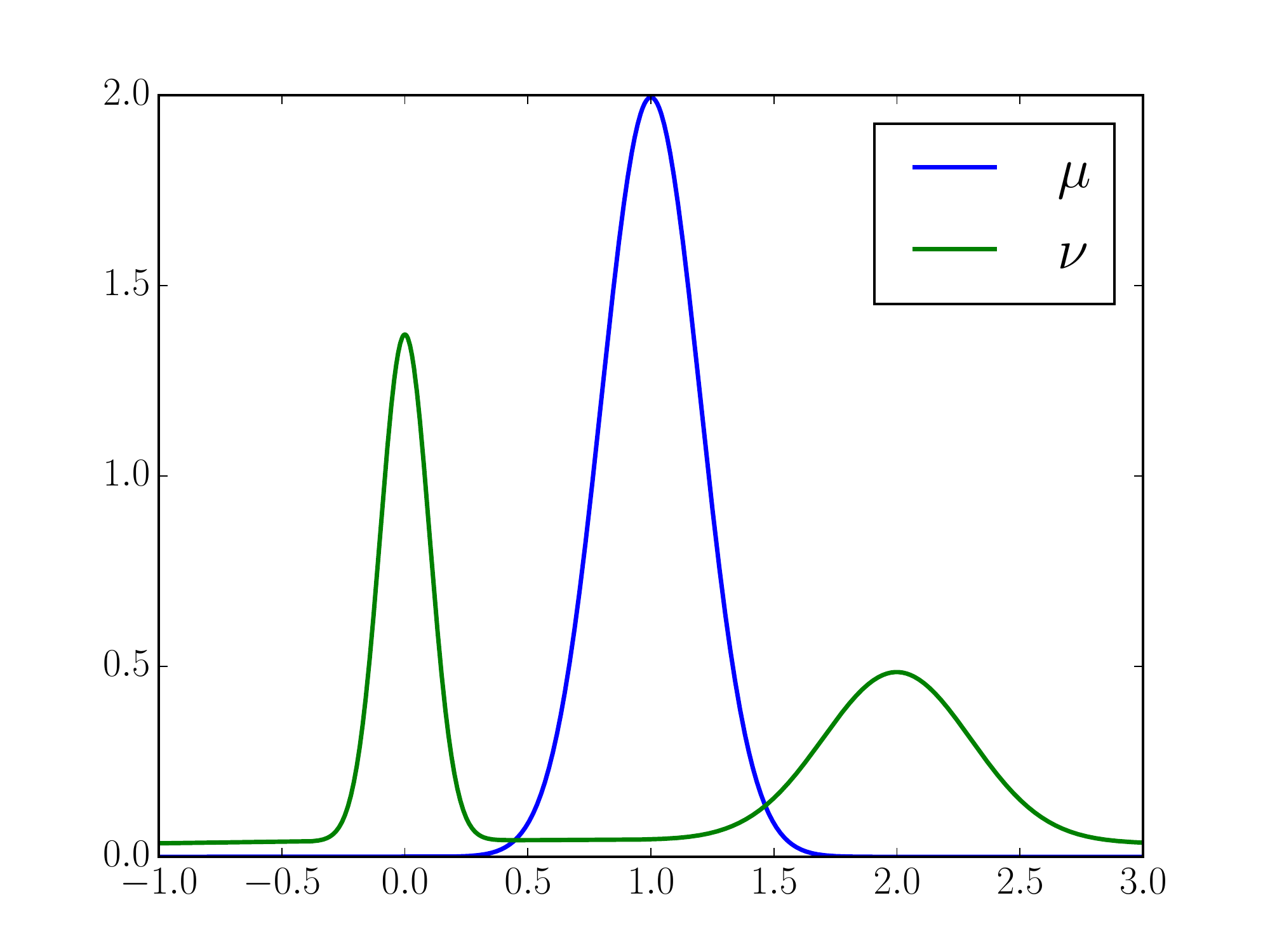} &
\includegraphics[trim=1.05cm 0 1.05cm 0,clip,width=.35\linewidth]{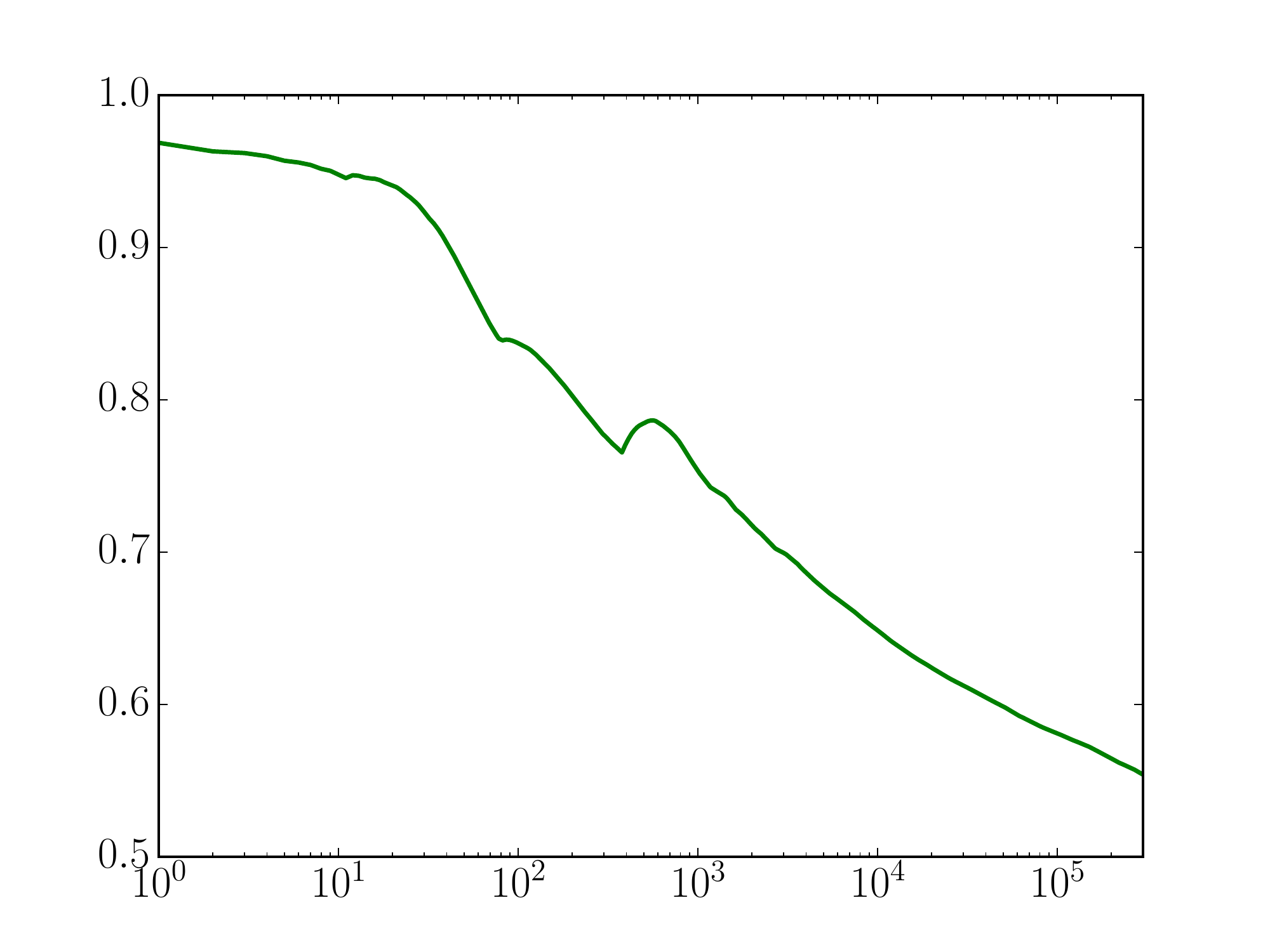} &
\includegraphics[trim=1.05cm 0 1.05cm 0,clip,width=.35\linewidth]{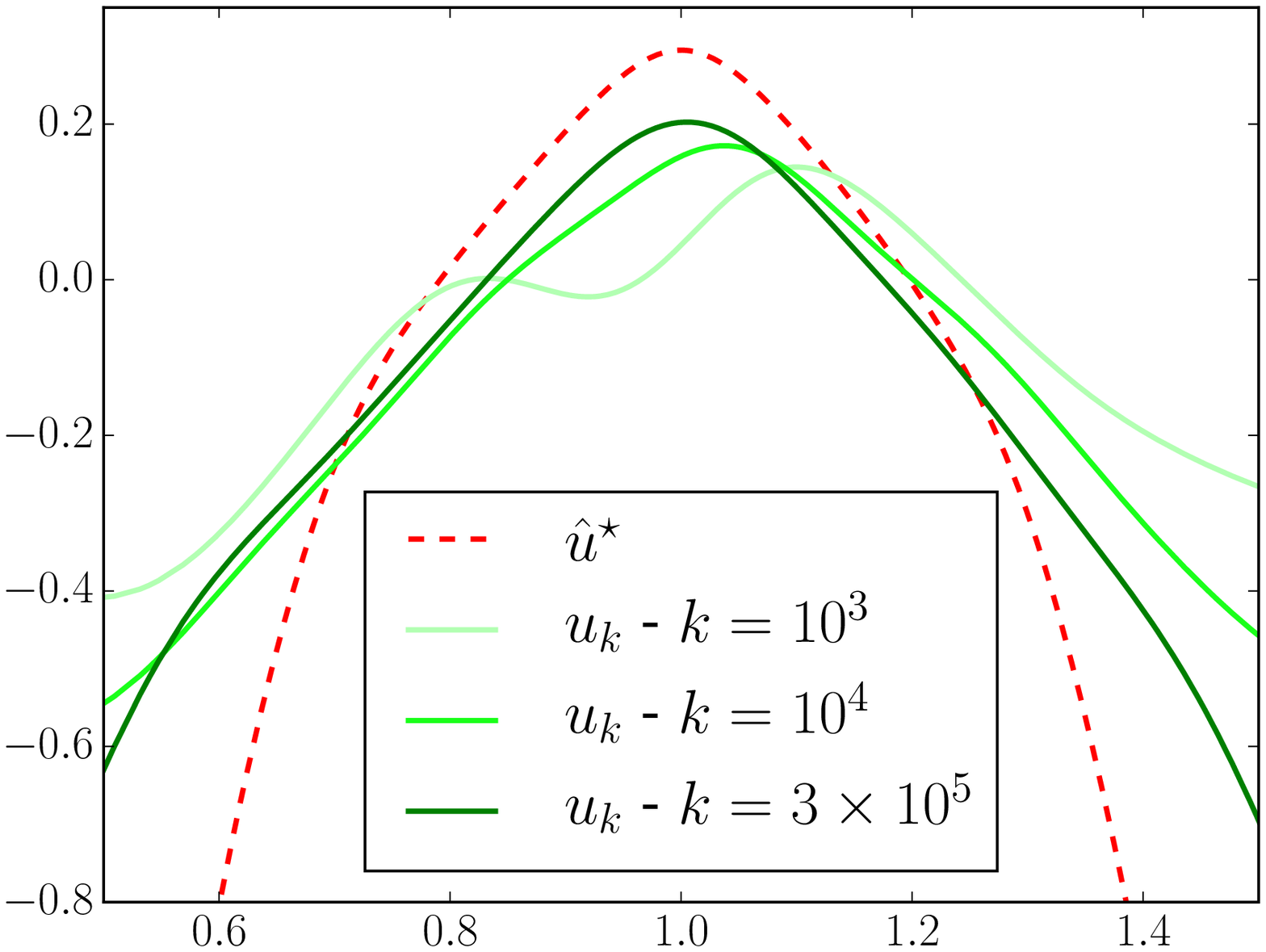} 
 \\[-3mm]
(a) setting & (b) convergence of $u_k$ & (c) plots of $u_k$ 
\end{tabular}\vspace{-1mm}
\caption{ 
(a) Plot of $\frac{\d\mu}{\d x}$ (blue) and $\frac{\d\nu}{\d x}$ (green).
(b) Plot of $\norm{\uvect_k-\hat \uvect^\star}_2/\norm{\hat \uvect^\star}_2$ as a function of $k$ with SGD in the RKHS, for regularized OT using $\epsilon=10^{-1}$. (c) Plot of the iterates $u_k$ for $k = 10^{3}, 10^{4},10^{5}$ and the proxy for the true potential $\hat \uvect^\star$, evaluated on a grid where $\mu$ has non negligible mass.
\label{fig:continuous}}
\end{figure}


\section*{Conclusion}

We have shown in this work that the computations behind (regularized) optimal transport can be considerably alleviated, or simply enabled, using a stochastic optimization approach. In the discrete case, we have shown that incremental gradient methods can surpass the Sinkhorn algorithm in terms of efficiency, taken for granted that the (constant) stepsize has been correctly selected, which should be possible in practical applications. We have also proposed the first known methods that can address the challenging semi-discrete and continuous cases. All of these three settings can open new perspectives for the application of OT to high-dimensional problems. 

\section*{Acknowledgement}

The work of G. Peyr\'e has been supported by the European Research Council (ERC project SIGMA-Vision). The work of A. Genevay has been supported by R\'egion Ile-de-France. M. Cuturi gratefully acknowledges the support of JSPS young research A grant 26700002.

\bibliographystyle{plain}
\bibliography{biblio}

\appendix


\section{Convergence of $(\Ss_\epsilon)$ as $\epsilon \rightarrow 0$}
\label{sec-appendix-convergence}

The convergence of the solution of~\eqref{primal} toward a solution of $(\Pp_0)$ as $\epsilon \rightarrow 0$ is proved in \cite{carlier2015convergence}.
The convergence of solutions of~\eqref{dual} toward solutions of $(\Dd_0)$ as $\epsilon \rightarrow 0$ is proved for the special case of discrete measures in \cite{CominettiAsympt}. 
To the best of our knowledge, the behavior of~\eqref{marg_dual} has not been studied in the literature, and we propose a convergence result in the case where $\nu$ is discrete, which is the setting in which this formulation is most advantageous.



\begin{proposition}
\label{prop:convergence}
We assume that $\forall y \in \Y$, $c(\cdot,y) \in L^{1}(\mu)$, that $\nu = \sum_{j=1}^J \nuvect_j \delta_{y_j}$, and we fix $x_0 \in \X$,. 
For all $\epsilon>0$, let $v^\epsilon$ be the unique solution of~\eqref{marg_dual} such that $v^\epsilon(x_0)=0$. 
Then $(v^\epsilon)_\epsilon$ is bounded and all its converging sub-sequences for $\epsilon \rightarrow 0$ are solutions of $(\Ss_0)$.
\end{proposition}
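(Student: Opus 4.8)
The plan is to read this as a stability-of-maximizers statement made tractable by the discreteness of $\nu$: with $J$ atoms, $(\Ss_\epsilon)$ is a concave maximization over the finite-dimensional space $\R^J$, and the key observation is that the smoothed objectives $\bar H_\epsilon$ converge to $\bar H_0$ not merely pointwise but \emph{uniformly}. The central ingredient is a soft-min estimate. Writing $a_j = c(x,y_j) - v_j$, the $\epsilon>0$ term in $\bar h_\epsilon(x,v)$ is the soft-minimum $-\epsilon \log(\sum_j e^{-a_j/\epsilon}\nuvect_j)$, and using $\sum_j \nuvect_j = 1$ and $\nuvect_{\min} \eqdef \min_j \nuvect_j > 0$ it is sandwiched as $\min_j a_j \leq -\epsilon \log(\sum_j e^{-a_j/\epsilon}\nuvect_j) \leq \min_j a_j - \epsilon \log \nuvect_{\min}$. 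Integrating against $\mu$ (legitimate since $c(\cdot,y_j) \in L^1(\mu)$ and $J$ is finite) gives the uniform bound $\sup_v |\bar H_\epsilon(v) - \bar H_0(v)| \leq \epsilon(1 - \log \nuvect_{\min})$. This is the one place where discreteness of $\nu$ is genuinely used, and it turns the rest of the argument into routine stability computations.

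Next I would establish boundedness of the normalized maximizers $(v^\epsilon)_\epsilon$. The normalization removes the constant-shift degeneracy through the associated $c$-transform potential, imposing $(v^\epsilon)^{c,\epsilon}(x_0)=0$, i.e. $\sum_j e^{(v^\epsilon_j - c(x_0,y_j))/\epsilon}\nuvect_j = 1$. Bounding each summand by $1$ gives the a priori upper control $v^\epsilon_j \leq c(x_0,y_j) - \epsilon \log \nuvect_{\min}$ on every coordinate, while bounding the largest summand below by $1/J$ gives $\max_j v^\epsilon_j \geq \min_k c(x_0,y_k) - \epsilon \log J$. For the lower bound on the coordinates I would invoke optimality and shift-invariance: $\bar H_\epsilon(v^\epsilon) = \max \bar H_\epsilon \geq \bar H_\epsilon(0) \geq \int_\X \min_j c(x,y_j)\,\d\mu(x) - \epsilon$, which combined with the uniform estimate yields $\bar H_0(v^\epsilon) \geq m$ for a fixed constant $m$ and all small $\epsilon$. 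Using $\min_j(c(x,y_j)-v_j) \leq c(x,y_{j_{\max}}) - \max_j v_j$ to bound the integral term from above and the lower control on $\max_j v^\epsilon_j$, one checks that on the admissible slice $\bar H_0(v) \to -\infty$ as any coordinate tends to $-\infty$ (the term $\sum_j \nuvect_j v_j$ dominates). Hence the sublevel set $\{\bar H_0 \geq m\}$ intersected with the slice is bounded, so $(v^\epsilon)_\epsilon$ is bounded.

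Given boundedness in $\R^J$ and the uniform convergence $\bar H_\epsilon \to \bar H_0$, convergence of subsequences follows immediately. For any convergent subsequence $v^{\epsilon_n} \to v^\star$, continuity of $\bar H_0$ (by dominated convergence, since its integrand is $1$-Lipschitz in $v$ and dominated by an $L^1(\mu)$ envelope built from $\max_j |c(\cdot,y_j)|$) together with the chain $\bar H_0(v^\star) = \lim_n \bar H_0(v^{\epsilon_n}) \geq \lim_n \bar H_{\epsilon_n}(v^{\epsilon_n}) \geq \lim_n \bar H_{\epsilon_n}(w) = \bar H_0(w)$, valid for every competitor $w \in \R^J$, shows that $v^\star$ maximizes $\bar H_0$ and hence solves $(\Ss_0)$.

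I expect the main obstacle to be the boundedness step, and specifically the lower bound on the coordinates of $v^\epsilon$: the upper bound and the lower bound on $\max_j v^\epsilon_j$ fall out directly from the normalization, but controlling every coordinate from below requires combining optimality, shift-invariance and the coercivity of $\bar H_0$ on the normalized slice while tracking the $\epsilon$-dependence of all constants. The uniform soft-min estimate is the enabling device that keeps these constants explicit and reduces the passage to the limit to a standard stability-of-argmax argument.
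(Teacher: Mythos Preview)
Your proposal is correct and follows a genuinely different route from the paper's proof.

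\textbf{Convergence of the objectives.} You prove the \emph{uniform} estimate $\sup_{v}|\bar H_\epsilon(v)-\bar H_0(v)|\le \epsilon(1-\log\nuvect_{\min})$ from the soft-min sandwich. The paper proves only \emph{pointwise} convergence $H_\epsilon\to H_0$ (same sandwich, integrated by dominated convergence), and to pass from $H_\epsilon(v^\epsilon)$ to $H_0(\bar v)$ it instead applies the Mean Value Theorem together with the observation that $\nabla H_\epsilon(v)=\nu-\pi(v)$ is a difference of two probability vectors, hence uniformly bounded. Your uniform bound makes the final stability-of-argmax chain a one-liner; the paper's MVT/bounded-gradient device achieves the same end with a slightly longer computation.

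\textbf{Boundedness of $(v^\epsilon)_\epsilon$.} The paper goes through the $c$-transform $\tilde v^\epsilon(y_i)=\min_x c(x,y_i)-u^\epsilon(x)$, invokes the classical OT fact that $c$-transforms inherit the modulus of continuity $\omega$ of $c$ (citing Santambrogio) to control $|\tilde v^\epsilon(y_i)-\tilde v^\epsilon(y_j)|$, and bounds $|v^\epsilon-\tilde v^\epsilon|$ by a soft-max argument; together with the normalization this gives boundedness. Your approach is more elementary and self-contained: you extract the upper bound and the lower bound on $\max_j v^\epsilon_j$ directly from the normalization, and then use optimality plus coercivity of $\bar H_0$ on the normalized slice to rule out any coordinate escaping to $-\infty$. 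This avoids importing a modulus-of-continuity assumption on $c$ that the paper's argument tacitly needs but the proposition does not state; only the $L^1(\mu)$ hypothesis is used.

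\textbf{Normalization.} The condition ``$v^\epsilon(x_0)=0$ with $x_0\in\X$'' is a typo in the statement, since $v$ lives on $\Y$. The paper's proof silently reads it as $v^\epsilon(y_0)=0$ for some fixed $y_0\in\Y$; you read it as $(v^\epsilon)^{c,\epsilon}(x_0)=0$. Both are legitimate ways to kill the additive degeneracy, and both lead to correct proofs.
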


We first prove a useful lemma.

\begin{lemma}
If $\forall y$, $x \mapsto c(x,y) \in L^{1}(\mu)$ then $\Heps$ converges pointwise to $H_0$.
\end{lemma}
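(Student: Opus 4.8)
The plan is to fix an arbitrary admissible $v \in \Cc(\Y)$ (equivalently a vector $\vvect \in \R^J$, since $\nu$ is supported on $\{y_1,\dots,y_J\}$) and to show that each of the three terms making up $\Heps(v)$ converges as $\epsilon \to 0$. The term $\int_\Y v \, d\nu = \sum_j \vvect_j \nuvect_j$ does not depend on $\epsilon$, and the additive constant $-\epsilon$ trivially vanishes, so everything reduces to proving $\int_\X v^{c,\epsilon} \, d\mu \to \int_\X v^{c,0}\, d\mu$. I would establish this with the dominated convergence theorem, which requires two ingredients: pointwise convergence of the integrand, and a uniform $L^1(\mu)$ dominating function.

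For the pointwise convergence, I would rewrite the smoothed $c$-transform as a soft-min. Setting $r_j(x) = c(x,y_j) - \vvect_j$ and $m(x) = \min_j r_j(x) = v^{c,0}(x)$, a factorization of the log-sum-exp gives $v^{c,\epsilon}(x) = m(x) - \epsilon \log\big(\sum_j e^{-(r_j(x)-m(x))/\epsilon}\nuvect_j\big)$. Since each exponent $r_j - m$ is nonnegative and vanishes on the argmin set, the sum inside the logarithm stays in $[\,\nuvect_{\min}, 1\,]$, where $\nuvect_{\min} = \min_j \nuvect_j > 0$; hence the logarithm is $O(1)$, its product with $\epsilon$ goes to zero, and $v^{c,\epsilon}(x) \to m(x) = v^{c,0}(x)$ for every $x$. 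This is the classical convergence of the soft-min to the hard-min.

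The same two-sided estimate furnishes the domination essentially for free: from $\nuvect_{\min} \le \sum_j e^{-(r_j-m)/\epsilon}\nuvect_j \le 1$ one obtains, for every $x$ and every $\epsilon \in (0,1]$,
\[
	v^{c,0}(x) \;\le\; v^{c,\epsilon}(x) \;\le\; v^{c,0}(x) + \epsilon\,|\log \nuvect_{\min}|.
\]
Therefore $|v^{c,\epsilon}(x)| \le |m(x)| + |\log\nuvect_{\min}|$, and since $m(x)=\min_j(c(x,y_j)-\vvect_j)$ satisfies $|m(x)| \le \sum_{j=1}^J |c(x,y_j)| + \max_j|\vvect_j|$, the hypothesis $c(\cdot,y_j)\in L^1(\mu)$ for each of the finitely many $j$ yields an explicit $\mu$-integrable majorant $G(x) = \sum_j |c(x,y_j)| + \max_j|\vvect_j| + |\log\nuvect_{\min}|$, valid uniformly in $\epsilon$. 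Dominated convergence then gives $\int_\X v^{c,\epsilon}\,d\mu \to \int_\X v^{c,0}\,d\mu$, and collecting the three terms proves $\Heps(v) \to H_0(v)$.

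The crux I expect is obtaining the \emph{uniform} domination: pointwise convergence is routine, but to move the limit inside the integral one must bound $v^{c,\epsilon}$ by a single $L^1(\mu)$ function independent of $\epsilon$. The discreteness of $\nu$ is exactly what makes this clean — it supplies a strictly positive $\nuvect_{\min}$ and collapses $\max_j|c(\cdot,y_j)|$ into a finite sum of $L^1(\mu)$ functions; without finite support the soft-min tail would have to be controlled by a different argument.
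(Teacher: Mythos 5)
Your proof is correct and follows essentially the same route as the paper's: a two-sided sandwich of the log-sum-exp between the hard min and the hard min plus $\epsilon$ times a logarithm of the weights, giving pointwise convergence of $v^{c,\epsilon}$ to $v^{c,0}$, followed by dominated convergence using the integrability of $c(\cdot,y_j)$ for each of the finitely many $j$. Your write-up is in fact slightly more explicit than the paper's about the $\epsilon$-uniform $L^1(\mu)$ majorant, which the paper leaves implicit.
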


\begin{proof}
Let $\alpha_j(x) \eqdef v_j - c(x,y_j)$ and $j^\star \eqdef \argmax_j \alpha_j(x)$. \\
On the one hand, since $\forall j$, $\alpha_j(x) \leq \alpha_{j^\star}(x)$ we get 
\beqa
\epsilon \log (\sumj e^{\frac{\alpha_j(x)}{\epsilon}} \nu_j) = \epsilon \log (e^{\frac{\alpha_{j^\star}(x)}{\epsilon}} \sumj e^{\frac{\alpha_j(x)-\alpha_{j^\star}(x)}{\epsilon}} \nu_j) 
\leq  \alpha_{j^\star}(x) + \epsilon \log (\sumj  \nu_j) = \alpha_{j^\star}(x)
\eeqa
On the other hand, since $\log$ is increasing and all terms in the sum are non negative we have
\beqa
\epsilon \log (\sumj e^{\frac{\alpha_j(x)}{\epsilon}} \nu_j) \geq \epsilon \log ( e^{\frac{\alpha_{j^\star}(x)}{\epsilon}} \nu_{j^\star}) = \alpha_{j^\star}(x) + \epsilon \log (\nu_{j^\star}) \overset{\epsilon \to 0}{\longrightarrow} \alpha_{j^\star}(x)
\eeqa
Hence $\epsilon \log (\sumj e^{\frac{\alpha_j(x)}{\epsilon}} \nu_j) \overset{\epsilon \to 0}{\longrightarrow} \alpha_{j^\star}(x)$ and $\epsilon \log (\sumj e^{\frac{\alpha_j(x)}{\epsilon}} \nu_j) \leq   \alpha_{j^\star}(x)$.\\
Since we assumed $x \mapsto c(x,y_j) \in L^{1}(\mu)$, then $\alpha_{j^\star} \in L^{1}(\mu)$ and by dominated convergence we get that $\Heps (v) \overset{\epsilon \to 0}{\longrightarrow} H_0(v)$.
\end{proof}


\begin{proof}[Proof of Proposition~\ref{prop:convergence}]
First, let's prove that $(\veps)_{\epsilon}$ has a converging subsequence.
With similar computations as in Proposition~\NIPSvsARXIV{2.1}{\ref{prop_equivalence}} we get that $\veps(y_i) = -\epsilon \log(\int_{\X} e^{\frac{\ueps(x)-c(x,y_i)}{\epsilon}}d\mu(x))$. We denote by $\vtild$ the c-transform of $\ueps$ such that $\vtild(y_i) = \min_{x \in \X} c(x,y_i) - \ueps(x)$. From standard results on optimal transport (see~\cite{Santambrogio}, p.11) we know that $\mid \vtild(y_i) - \vtild(y_j) \mid \leq \omega(\norm{y_i-y_j})$ where $\omega$ is the modulus of continuity of the cost $c$. Besides, using once again the soft-max argument we can bound  $\mid \veps(y) - \vtild(y) \mid$ by some constant C.

Thus we get that :
\beqa
\mid \veps(y_i) - \veps(y_j) \mid & \leq  & \mid \veps(y_i) - \vtild(y_i) \mid + \mid \vtild(y_i) - \vtild(y_j) \mid + \mid \vtild(y_j) - \veps(y_j) \mid \\
 & \leq & C + \omega(\norm{y_i - y_j}) + C
\eeqa

Besides, the regularized potentials are unique up to an additive constant. Hence we can set without loss of generality $\veps(y_0) = 0$. So from the previous inequality yields :
\beq
\veps(y_i) \leq 2 C + \omega(\norm{y_i - y_0})
\eeq
So $\veps$ is bounded on $\R^J$ which is a compact set and thus we can extract a subsequence which converges to a certain limit that we denote by $\bar{v}$.

Let $v^\star \in \argmax_v H_0$. To prove that $\vbar$ is optimal, it suffices to prove that $H_0(v^\star) \leq H_0(\bar{v})$.\\
By optimality of $\veps$,
\beq
\Heps (v^\star) \leq \Heps (\veps)
\eeq
The term on the left-hand side of the inequality converges to $H_0(v^\star)$ since $\Heps$ converges pointwise to $H_0$. We still need to prove that the right-hand term converges to $H_0(\bar{v})$.

By the Mean Value Theorem, there exists $\tilde{\veps} \eqdef (1-t^{\epsilon}) \veps + t^{\epsilon} \vbar$ for some $t^{\epsilon} \in [0,1]$ such that
\beq
\label{mvt}
\mid \Heps (\veps) - \Heps (\vbar) \mid \leq \norm{\nabla \Heps (\tilde{\veps})} \norm{  \veps - \vbar }
\eeq
The gradient of $\Heps$ reads
\beq
\nabla_v \Heps(v) =  \nu - \pi(v)
\eeq
where $\pi_i (v) =  \frac{ \int_{\X} e^{ \frac{v_i-c(x,y_i)}{\epsilon}}\nu_i d\mu(x) }{\int_{\X} \sumj e^{\frac{v_j - c(x,y_j)}{\epsilon}}\nu_j d\mu(x) }$

It is the difference of two elements in the simplex thus it is bounded by a constant $C$ independently of $\epsilon$.

Using this bound in  \eqref{mvt} get
\beq
\Heps (\vbar) - C  \norm{  \veps - \vbar }  \leq  \Heps (\veps)  \leq  \Heps (\vbar) + C  \norm{  \veps - \vbar }  
\eeq

By pointwise convergence of $\Heps$ we know that $\Heps (\vbar) \rightarrow H_0(\vbar)$, and since $\vbar$ is a limit point of $\veps$ we can conclude that the left and right hand term of the inequality converge to $H_0(\vbar)$. Thus we get $\Heps (\veps)\rightarrow H_0(\vbar)$.
\end{proof}

\NIPSvsARXIV{
\section{Discrete-Discrete Setting}

The list of authors we consider is:
KEATS, CERVANTES, SHELLEY, WOOLF, NIETZSCHE, PLUTARCH, FRANKLIN, COLERIDGE, MAUPASSANT, NAPOLEON, AUSTEN, BIBLE, LINCOLN, PAINE, DELAFONTAINE, DANTE, VOLTAIRE, MOORE, HUME, BURROUGHS, JEFFERSON, DICKENS, KANT, ARISTOTLE, DOYLE, HAWTHORNE, PLATO, STEVENSON, TWAIN, IRVING, EMERSON, POE, WILDE, MILTON, SHAKESPEARE.}{}

\end{document}